\newtheorem{theorem}{Theorem}[section]
\newtheorem{lemma}[theorem]{Lemma}
\newtheorem{definition}[theorem]{Definition}
\newtheorem{corollary}[theorem]{Corollary}
\newtheorem{remark}[theorem]{Remark}
\newcommand{\ct}{\mathcal{T}}
\newcommand{\ttn}{\left(T(t)\right)_{t\geq 0}}
\newcommand{\ssn}{\left(S(t)\right)_{t\geq 0}}
\newcommand{\norm}[1]{\left\lVert#1\right\rVert}
\title[ergodic characterization of reflexivifty]{A semigroup analogue of the\\
 Fonf--Lin--Wojtaszczyk ergodic characterization\\
  of reflexive Banach spaces with a basis}
\author{\textsc{Delio Mugnolo} \\ University of T\"ubingen}
\thanks{The author is supported by the Istituto Nazionale di Alta Matematica ``Francesco Severi''.\\[3pt]
This article  was originally published in Studia Mathematica 164 (3) (2004), 243--251.}
\keywords{Schauder bases, Reflexive Banach spaces, Ergodic semigroups of operators}
\subjclass[2000]{47A35}
\begin{document}

\maketitle

\begin{abstract}
In analogy to a recent result by V. Fonf, M. Lin, and P. Wojtaszczyk, we prove the following characterizations of a Banach space $X$ with a basis.
\begin{itemize}
    \item[(i)] $X$ is finite-dimensional if and only if every bounded, uniformly continuous, mean ergodic semigroup on $X$ is uniformly mean ergodic.
    \item[(ii)] $X$ is reflexive if and only if every bounded strongly continuous semigroup is mean ergodic if and only if every bounded uniformly continuous semigroup on $X$ is mean ergodic.
\end{itemize}
\end{abstract}

\section{Introduction}

E.R. Lorch proved in the 1930s that if a Banach space $X$ is reflexive, then every power-bounded operator $T$ on $X$ is mean ergodic, i.e., the sequence $\left(\frac{1}{n}\sum_{k=1}^n T^kx\right)_{n\in\mathbb{N}}$ converges for all $x\in X$. In a recent article Fonf, Lin, and Wojtaszczyk have proven (\cite{FLW01}, Cor.~1) that the converse is also true if we assume $X$ to have a basis. Indeed, under this assumption, namely that $X$ is a non-reflexive Banach space with a basis, they have been able to construct a power-bounded operator $T$ on $X$ which is not mean ergodic.

Paralleling their technique, we show in Section~3 that an analogous characterization holds if bounded strongly continuous semigroups of operators are considered instead of power-bounded operators. To this purpose, we introduce in Section~2 a semigroup that in turn permits to prove an ergodic characterization of finite-dimensional Banach spaces that is the semigroup analogue of \cite{FLW01}, Cor.~3.

We emphasize that mean ergodicity of a bounded strongly continuous semigroup $\ttn$ is \textit{not} equivalent to mean ergodicity of the individual operators $T(t)$, $t>0$. If for some $t_0>0$ the power-bounded operator $T(t_0)$ is mean ergodic, so is the bounded semigroup $\ttn$, cf. \cite{DS58}, Thm.~VII.7.1 for $t_0=1$. However, there exist examples of bounded, mean ergodic, strongly continuous semigroups $\ttn$ so that no operator $T:=T(t_0)$, $t_0>0$, is mean ergodic, cf. \cite{EN00}, Expl.~V.4.13.3 (see also \cite{Kr85}, p.~83).

\section{Introductory results}

We begin with a brief reminder of some ergodic theory, and refer to \cite{Kr85} or \cite{EN00}, \S~V.4 for details.

\begin{definition}
Let $\ct:=\ttn$ be a strongly continuous semigroup of linear operators on a Banach space $X$. We denote by
\[
C(r)x:=\frac{1}{r}\int_0^r T(s)x\,ds,\qquad\qquad x\in X,\enspace r>0,
\]
the Ces\`aro means of $\ct$. Then the semigroup $\ct$ is called mean ergodic (uniformly mean ergodic, resp.), if $(C(r))_{r>0}$ converges strongly (with respect to the operator norm, resp.) as $r\rightarrow\infty$.
\end{definition}

The following useful characterization of mean ergodicity of a strongly continuous semigroup is due to Nagel (\cite{Na73}, Thm.~1.7). It is the semigroup analogue of the result of Sine stated in Lemma~3.5 below.

\begin{lemma}\label{lem:2.2}
A bounded strongly continuous semigroup is mean ergodic if and only if its fixed space separates the fixed space of its adjoint.
\end{lemma}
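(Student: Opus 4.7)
The plan is to decompose $X$ into the fixed space and the closure of an ``$I-T$''-type subspace, and then to dualize the resulting density condition by Hahn--Banach. Write $F := \{x \in X : T(t)x = x \text{ for all } t \geq 0\}$ and let $M$ denote the closed linear span of $\{y - T(t)y : y \in X,\ t > 0\}$. The goal is to prove that $\ct$ is mean ergodic if and only if $\overline{F + M} = X$, and then to show that this density condition translates into the assertion ``$F$ separates $\operatorname{Fix}(\ct^*)$'' via an annihilator computation.

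For the first equivalence I would rely on three observations. If $x \in F$, then $C(r)x = x$ for every $r > 0$. If $x = y - T(t)y$ for some $y \in X$ and $t > 0$, the telescoping identity
\[
C(r)x = \frac{1}{r}\int_0^t T(s)y\,ds - \frac{1}{r}\int_r^{r+t} T(s)y\,ds
\]
together with boundedness of $\ct$ yields $C(r)x \to 0$. Since the semigroup is bounded, $\sup_{r>0}\norm{C(r)} < \infty$, so this extends to $C(r)x \to 0$ for every $x \in M$, and the set on which $C(r)x$ converges is closed in $X$. Finally, the identity
\[
T(t)C(r)x - C(r)x = \frac{1}{r}\int_r^{r+t}T(s)x\,ds - \frac{1}{r}\int_0^t T(s)x\,ds
\]
also vanishes as $r \to \infty$, so any limit of $C(r)x$ automatically lies in $F$. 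Combining these, $F \cap M = \{0\}$ (an $x \in F \cap M$ satisfies $x = \lim C(r)x = 0$), and $\ct$ is mean ergodic precisely when $\overline{F + M} = X$; the limit operator is then the projection onto $F$ along $M$.

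The second equivalence reduces to Hahn--Banach: density of $F + M$ holds if and only if every $x^* \in X^*$ annihilating both $F$ and $M$ is zero. Now $x^*$ annihilates $M$ iff $\langle y - T(t)y, x^* \rangle = 0$ for all $y \in X$ and $t > 0$, iff $T(t)^* x^* = x^*$ for every $t > 0$, iff $x^* \in \operatorname{Fix}(\ct^*)$. Hence density of $F + M$ is equivalent to the statement that no nonzero $x^* \in \operatorname{Fix}(\ct^*)$ vanishes on $F$, which is precisely the required separation condition. I expect the main obstacle to be essentially bookkeeping: verifying the directness of $F + M$ and the closedness of the convergence set, both of which hinge on the uniform bound $\sup_r \norm{C(r)} < \infty$ inherited from boundedness of $\ct$; without boundedness the identifications above break down and the characterization fails.
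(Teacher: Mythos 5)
The paper does not actually prove Lemma~\ref{lem:2.2}: it quotes it from Nagel \cite{Na73}, Thm.~1.7, and your argument is exactly the standard proof of that cited result (the splitting of $X$ into $F$ and $M=\overline{\operatorname{span}}\{y-T(t)y\}$ plus a Hahn--Banach annihilator computation, as in \cite{EN00}, Thm.~V.4.5), and it is correct. The only step you leave implicit is in the direction ``mean ergodic $\Rightarrow\overline{F+M}=X$'': there you need $x-C(r)x\in M$, which holds because $x-C(r)x=\frac{1}{r}\int_0^r\bigl(x-T(s)x\bigr)\,ds$ is a norm limit of Riemann sums of elements of $M$ and $M$ is closed.
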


\begin{remark}\label{rem:2.3}
We recall that the fixed space of a strongly continuous semigroup equals the null space of its generator (use \cite{EN00}, Cor.~IV.3.8(i)). Using the theory of sun dual semigroups, one can also see that the same holds for the adjoint of a strongly continuous semigroup (which in general is \textit{not} strongly continuous). Hence, a bounded strongly continuous semigroup is mean ergodic if and only if the null space of its generator $A$ separates the null space of its adjoint $A'$. In particular, a bounded strongly continuous semigroup is mean ergodic if the adjoint of its generator has trivial null space.
\end{remark}

For uniformly mean ergodic semigroups the following characterization is well-known, cf. \cite{EN00}, Thm.~V.4.10.

\begin{lemma}\label{lem:2.4}
A bounded strongly continuous semigroup is uniformly mean ergodic if and only if either its generator is invertible with bounded inverse, or the origin is a pole for its resolvent operator.
\end{lemma}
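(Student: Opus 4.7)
The plan is to analyse the Cesàro averages via the operator identity $A\,C(r)x=\frac{1}{r}(T(r)x-x)$ on $D(A)$, and to relate uniform convergence of $C(r)$ to the Laurent expansion of $R(\lambda,A)$ at $\lambda=0$. The hard half is the converse, where the spectral information at the origin has to be recovered from the uniform behaviour of the averages.

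For the sufficiency I would argue in two cases. First, if $0\in\rho(A)$, then $A^{-1}$ is bounded, and the identity above extends from $D(A)$ to all of $X$ as $C(r)=\frac{1}{r}A^{-1}(T(r)-I)$. Since $\ttn$ is bounded by some $M$, this immediately gives $\norm{C(r)}\leq\frac{M+1}{r}\norm{A^{-1}}\to 0$. Second, if $0$ is a pole of $R(\cdot,A)$, the boundedness of $\ttn$ forces the pole to be simple, so that the Laurent coefficient $P$ is a bounded projection with $PX=\ker A$ and $\ker P$ an $A$-invariant complement on which $A$ is invertible with bounded inverse. Applying the first case to the restricted bounded semigroup on $\ker P$ yields $C(r)|_{\ker P}\to 0$ uniformly, while $C(r)|_{PX}=I|_{PX}$ is constant, so $C(r)\to P$ in operator norm.

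For the necessity, assume $C(r)\to P$ uniformly. From $A\,C(r)=\frac{1}{r}(T(r)-I)\to 0$ uniformly together with the closedness of $A$, one deduces that $PX\subseteq D(A)$ with $AP=0$; since $C(r)x=x$ for every $x\in\ker A$, $P$ is actually the projection onto $\ker A$ along the closed subspace $Y:=(I-P)X$. On $Y$ the semigroup is bounded and its Cesàro means tend to $0$ uniformly, so it suffices to show that the generator of the restricted semigroup is invertible with bounded inverse: combined with the Laurent expansion this forces the full resolvent to satisfy $R(\lambda,A)=P/\lambda+H(\lambda)$ near the origin with $H$ holomorphic, so either $P=0$ and $0\in\rho(A)$, or $P\neq 0$ and $0$ is a (simple) pole of $R(\cdot,A)$.

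To obtain invertibility of $A|_Y$, I would pick $r_{0}>0$ large enough that $\norm{C(r_0)|_Y}<1$, so that $I-C(r_0)|_Y$ is invertible on $Y$ by Neumann series. A Fubini manipulation applied to $(I-C(r_0))x=-\frac{1}{r_0}\int_0^{r_0}\int_0^{s}T(u)Ax\,du\,ds$ for $x\in D(A)\cap Y$ rewrites this as $(I-C(r_0))x=-K\,A\,x$, with the bounded operator $K:=\int_0^{r_0}\frac{(r_0-u)^2}{2r_0^{2}}\,T(u)\,du$ (the precise weight is routine). The identity $-(I-C(r_0)|_Y)^{-1}K$ then provides a bounded two-sided inverse of $A|_Y$, the argument for surjectivity running symmetrically since $K$ maps into $D(A)$. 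The main obstacle is exactly this step, namely producing a bounded inverse for $A|_Y$ out of the uniform smallness of $C(r)$; once it is done, matching the resulting spectral decomposition with the Laurent expansion of $R(\lambda,A)$ at $0$ gives the stated dichotomy.
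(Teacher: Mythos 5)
The paper offers no proof of Lemma~\ref{lem:2.4} at all --- it is quoted as well known with a reference to \cite{EN00}, Thm.~V.4.10 --- so there is nothing internal to compare against; your argument is essentially the standard proof given in that reference, and it is correct: the identity $C(r)=\frac1r A^{-1}(T(r)-I)$ handles the case $0\in\rho(A)$, boundedness forces any pole at $0$ to be simple with residue the projection onto $\ker A$, and in the converse direction the Neumann-series inversion of $I-C(r_0)$ on $Y=(I-P)X$ together with $(I-C(r_0))x=-KAx$ yields a bounded inverse of $A\restr{Y}$. The only blemish is the weight in $K$, which should be $\frac{r_0-u}{r_0}$ rather than $\frac{(r_0-u)^2}{2r_0^2}$ (one Fubini interchange, not two); as you note, this is routine and does not affect the argument.
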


For a thorough treatment of the theory of Schauder bases and Schauder decompositions on infinite dimensional spaces we refer the reader to \cite{Si70}, \S~I.14 or \cite{GD92}, Chapt.~1. We only recall that the notions of basis and Schauder basis are equivalent on Banach spaces, cf. \cite{Si70}, pag.~170, and that a Schauder basis is always associated with a Schauder decomposition, but the converse implication fails to hold. Let us moreover mention the following general result: If $X$ is a Banach space with a Schauder decomposition $X=\sum_{k\in \mathbb{N}}X_k$, then the norm of $X$ can, without loss of generality, be assumed to make the operators $Q_h$, $h\in\mathbb{N}$, defined by 
\begin{equation}\label{eq:2.1} \tag{2.1}
Q_h x:=x_h,\qquad P_h x:=\sum_{k=1}^h x_k,\qquad\qquad x:=\sum_{k\in\mathbb{N}}x_k\in X,
\end{equation}
contractive. Observe that, due to the uniqueness of the Schauder expansion of $x\in X$, $x=0$ if and only if $P_n x=0$ for all $n\in \mathbb{N}$. With this notation we now prove the following.

\begin{lemma}\label{lem:2.5}
Let $X$ be a Banach space with a Schauder decomposition. Then
\[
M(t)x:=\sum_{h\in\mathbb{N}}e^{-\frac{t}{h}} Q_h x,\qquad\qquad x\in X,\enspace t\ge0,
\]
defines a bounded uniformly continuous semigroup $\{\mathcal{M}\}:=(M(t))_{t\ge0}$ on $X$.
\end{lemma}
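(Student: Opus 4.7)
The plan is to establish, in order, that each $M(t)$ is a well-defined bounded operator on $X$ with $\sup_{t\ge 0}\|M(t)\|<\infty$, that $(M(t))_{t\ge 0}$ satisfies the semigroup law, and that $\|M(t)-I\|\to 0$ as $t\to 0^+$ (which is what uniform continuity of a semigroup amounts to). The only conceptual subtlety is that the Schauder decomposition is not assumed to be unconditional, so I cannot just multiply the convergent series $x=\sum_h Q_h x$ termwise by an arbitrary bounded sequence. What saves the day is that, for each fixed $t\ge 0$, the sequence $a_h(t):=e^{-t/h}$ is monotone increasing in $h$ and bounded by $1$, hence of bounded variation with telescoping sum $\sum_{h\ge 1}(a_{h+1}(t)-a_h(t))=1-e^{-t}$.

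The main step is summation by parts:
\[
\sum_{h=1}^{n} e^{-t/h}Q_h x \;=\; a_n(t)\,P_n x \;-\; \sum_{h=1}^{n-1}\bigl(a_{h+1}(t)-a_h(t)\bigr)P_h x.
\]
Since $\|P_h x\|\le \|x\|$ by the standing contractivity normalization in \eqref{eq:2.1}, the rearranged tail converges absolutely with norm at most $(1-e^{-t})\|x\|$, while $a_n(t)P_n x\to x$ as $n\to\infty$ (and the first term equals $P_n x$ when $t=0$). This simultaneously yields existence of $M(t)x$ and the uniform bound $\|M(t)\|\le 2-e^{-t}\le 2$ on $[0,\infty)$.

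For the semigroup law, the biorthogonality $Q_h Q_k=\delta_{hk}Q_h$, which follows at once from the uniqueness of the Schauder expansion, allows one to pull the bounded operator $Q_h$ through the convergent sum defining $M(s)x$, so that $M(t)M(s)x=M(t+s)x$ reduces to the scalar identity $e^{-t/h}e^{-s/h}=e^{-(t+s)/h}$. For uniform continuity I would run exactly the same Abel computation with $a_h(t)$ replaced by $b_h(t):=e^{-t/h}-1$, which is again monotone in $h$, bounded, and tends to $0$. The boundary term $b_n(t)P_n x$ now vanishes in the limit since $b_n(t)\to 0$, leaving
\[
\|M(t)x-x\|\;\le\; \|x\|\sum_{h\ge 1}\bigl(b_{h+1}(t)-b_h(t)\bigr)\;=\;\|x\|\bigl(1-e^{-t}\bigr),
\]
so $\|M(t)-I\|\le 1-e^{-t}\to 0$ as $t\to 0^+$.

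The only genuine obstacle in this scheme is the one already flagged: the reflex to manipulate $\sum_h c_h\,Q_h x$ as if the decomposition were unconditional must be replaced throughout by summation by parts exploiting the monotonicity of $h\mapsto e^{-t/h}$. Once that replacement is made, existence, the uniform bound, and uniform continuity all drop out of a single estimate.
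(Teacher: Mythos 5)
Your proof is correct and follows essentially the same route as the paper: the paper's manipulations with the coefficients $b_{n,t}=e^{-t/n}-e^{-t/(n-1)}$ in \eqref{eq:2.2}--\eqref{eq:2.3} are exactly your summation by parts exploiting the monotonicity of $h\mapsto e^{-t/h}$ and the contractivity of the $P_h$, and both arguments culminate in the same estimate $\norm{M(t)x-x}\le(1-e^{-t})\norm{x}$. The only cosmetic differences are that the paper verifies uniform continuity by exhibiting the bounded generator $Ax=-\sum_h \frac1h Q_hx$ rather than by your (equally valid) direct bound on $\norm{M(t)-I}$, and that it dismisses the semigroup law as clear where you supply the biorthogonality argument.
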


For the proof (and later on) we will repeatedly use the auxiliary family $(b_{n,t})_{n\in \mathbb{N}}$, $t\ge0$, of sequences of positive numbers defined by
\begin{equation}\label{eq:2.2} \tag{2.2}
b_{1,t}:=e^{-t}\quad\textrm{and}\quad b_{n,t}:=e^{-\frac{t}{n}}-e^{-\frac{t}{n-1}}\qquad\qquad\textrm{for}\;\; n=2,3,\ldots,\enspace t\ge0.
\end{equation}
Observe that
\begin{equation}\label{eq:2.3} \tag{2.3}
\sum_{h=m+1}^n b_{h,t} = e^{-\frac{t}{n}} - e^{-\frac{t}{m}}, \qquad \sum_{h=1}^n b_{h,t} = e^{-\frac{t}{n}},\qquad\textrm{and}\quad \sum_{h=m+1}^\infty b_{h,t} = 1 - e^{-\frac{t}{m}}
\end{equation}
for all $m,n\in \mathbb{N}$, $m<n$, $t\ge0$.

\begin{proof}
We first show that $\mathcal{M}$ is well defined on $X$. Let $x=\sum_{k\in \mathbb{N}}x_k\in X$. Then
\[
\sum_{k=m+1}^n e^{-\frac{t}{k}}x_k \stackrel{\eqref{eq:2.3}}{=} \sum_{k=m+1}^n\bigg(\sum_{h=m+1}^k b_{h,t} + e^{-\frac{t}{m}}\bigg)x_k = e^{-\frac{t}{m}}\sum_{k=m+1}^n x_k + \sum_{j=m+1}^n b_{j,t}\bigg(\sum_{h=j}^n x_h\bigg),
\]
for all $m,n\in \mathbb{N}$, $m<n$, and the convergence of $\sum_{k\in \mathbb{N}} x_k$ yields that $\left(\sum_{k=1}^n e^{-\frac{t}{k}}x_k\right)_{n\in \mathbb{N}}$ is a Cauchy sequence. It is also clear that $\mathcal{M}$ is a semigroup, and that its generator $A$, defined by
\[
Ax:=-\sum_{h\in \mathbb{N}}\frac{1}{h}Q_h x,\qquad\qquad x\in X,
\]
is bounded, hence $\mathcal{M}$ is uniformly continuous. Also, $\mathcal{M}$ is bounded since for $t\ge0$ and $m\in \mathbb{N}$
\begin{align*}
\sum_{k=1}^m (1-e^{-\frac{t}{k}})x_k &\stackrel{\eqref{eq:2.3}}{=} \sum_{k=1}^m \sum_{h=k+1}^\infty b_{h,t}x_k = \sum_{h=2}^m b_{h,t}\bigg(\sum_{k=1}^{h-1} x_k\bigg) + \sum_{h=m+1}^\infty b_{h,t}\bigg(\sum_{k=1}^m x_k\bigg)\\
&=\sum_{h=2}^m b_{h,t}P_{h-1}x + \sum_{h=m+1}^\infty b_{h,t}P_m x.
\end{align*}
Recall that the operators $P_h$, $h\in\mathbb{N}$, are contractive, and hence we obtain
\[
\left\lVert\sum_{k=1}^m (1-e^{-\frac{t}{k}})x_k\right\rVert \le\sum_{h=2}^m b_{h,t}\norm{x} +\sum_{h=m+1}^\infty b_{h,t}\norm{x} = \sum_{h=2}^\infty b_{h,t}\norm{x} \stackrel{\eqref{eq:2.3}}{=}(1-e^{-t})\norm{x}.
\]
It follows that
\[
\norm{M(t)x-x} =\left\lVert\sum_{k\in\mathbb{N}}(1-e^{-\frac{t}{k}})x_k\right\rVert\le(1-e^{-t})\norm{x}\le\norm{x},
\]
and therefore $\norm{M(t)-I}\le 1$, hence $\norm{M(t)}\le 2$ for all $t\ge0$.
\end{proof}

\begin{theorem}
Let $X$ be a Banach space with a basis. Then $X$ is finite-dimensional if and only if every bounded, mean ergodic, uniformly continuous semigroup on $X$ is uniformly mean ergodic.
\end{theorem}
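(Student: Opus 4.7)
\medskip

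\noindent\textbf{Proof plan.} The forward direction is immediate: if $\dim X<\infty$ then the space of bounded operators on $X$ is itself finite-dimensional, so strong and operator-norm convergence of the Ces\`aro means $C(r)$ coincide, and every mean ergodic semigroup is automatically uniformly mean ergodic.

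For the converse I would argue by contrapositive and assume $X$ is infinite-dimensional with a basis, so that it carries an infinite Schauder decomposition $X=\sum_{k\in\mathbb{N}}X_k$. The task is to produce a single bounded, uniformly continuous, mean ergodic semigroup on $X$ that fails to be uniformly mean ergodic. The natural candidate is precisely the semigroup $\mathcal{M}$ constructed in Lemma~\ref{lem:2.5}: its boundedness and uniform continuity are already established there, and its generator is the bounded operator $Ax=-\sum_{h\in\mathbb{N}}\frac{1}{h}Q_h x$. The crucial observation is that for every nonzero $x\in X_k$ one has $Ax=-\frac{1}{k}x$, so each $-1/k$ is an eigenvalue of $A$.

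For mean ergodicity I would invoke Remark~\ref{rem:2.3}, which reduces matters to checking $\ker A'=\{0\}$. Given $\phi\in X'$ with $A'\phi=0$ and any $x\in X_k$, the relation $(A'\phi)(x)=\phi(Ax)=-\frac{1}{k}\phi(x)=0$ forces $\phi$ to vanish on each $X_k$; hence $\phi(P_nx)=0$ for all $n$ and $x$, and continuity of $\phi$ combined with $P_nx\to x$ yields $\phi=0$. For the failure of uniform mean ergodicity I would apply Lemma~\ref{lem:2.4}: since the eigenvalues $\{-1/k:k\in\mathbb{N}\}$ lie in $\sigma(A)$ and accumulate at $0$, the origin is a non-isolated point of $\sigma(A)$. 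Hence $A$ is not invertible with bounded inverse, and $0$ cannot be a pole of the resolvent; Lemma~\ref{lem:2.4} then rules out uniform mean ergodicity of $\mathcal{M}$.

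The one delicate step, and the place where the Schauder structure is genuinely used, is the duality argument establishing $\ker A'=\{0\}$: one must reconstruct a continuous functional from its restrictions to the individual $X_k$, which is made possible precisely by the approximation $P_nx\to x$ characteristic of a Schauder decomposition. The remainder of the argument is diagonal spectral bookkeeping on the explicit generator $A$.
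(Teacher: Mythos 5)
Your proposal is correct and follows essentially the same route as the paper: the counterexample is exactly the semigroup $\mathcal{M}$ of Lemma~\ref{lem:2.5}, mean ergodicity is obtained from $\ker A'=\{0\}$ via Remark~\ref{rem:2.3}, and the failure of uniform mean ergodicity from the accumulation of the eigenvalues $-1/k$ at the origin via Lemma~\ref{lem:2.4}. The only (harmless) differences are that you prove the finite-dimensional direction directly instead of by citation, and you dispose of both alternatives in Lemma~\ref{lem:2.4} with the single observation that $0$ is a non-isolated point of the spectrum, whereas the paper additionally exhibits the unboundedness of $A^{-1}$.
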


\begin{proof}
The necessity follows by \cite{EN00}, Cor.~I.2.11 and Thm.~V.4.10. Assume now $X$ to be infinite-dimensional with a basis, thus in particular with a Schauder decomposition. Hence, we can define, as in Lemma~\ref{lem:2.5}, the bounded uniformly continuous semigroup $\{\mathcal{M}\}$ generated by $A$. We prove that $\mathcal{M}$ is mean, but not \textit{uniformly} mean ergodic.

Let $x'\in \ker(A')$. Then
\[
0=\left<Ax,x'\right>=-\sum_{k\in\mathbb{N}}\frac{\left<Q_k x,x'\right>}{k}
\]
for all $x\in X$. Taking into account the uniqueness of the expansion of elements of $X$ we obtain that $x'=0$. By Remark~\ref{rem:2.3} this implies that $\mathcal{M}$ is mean ergodic.

The generator $A$ is injective, and its inverse is given by
\begin{equation}\label{eq:2.4} \tag{2.4}
A^{-1}x=-\sum_{h\in \mathbb{N}}h\mkern3mu Q_h x
\end{equation}
for those $x\in X$ such that the right-hand side of (2.4) converges. Hence $A^{-1}$ is not bounded. Moreover, $0$ is an accumulation point for the set of the eigenvalues of $A$. The claim now follows by Lemma~2.4. Thus, by Lemma~2.4 the bounded semigroup $\mathcal{M}$ is not uniformly mean ergodic.
\end{proof}

\section{Ergodic characterizations of reflexivity}

The technique involved in the proof of Theorem~3.4 below relies on a general result on the geometry of Banach spaces. Answering a question raised by Singer (\cite{Si62}, P~2), Zippin was able to prove (\cite{Zi68}, Thm.~1) that on every non-reflexive Banach space with a basis there exists a non-shrinking basis (we refer to \cite{Zi68} for details). Using this property, Fonf, Lin, and Wojtaszczyk have obtained (\cite{FLW01}) a result that can be summarized as follows.

\begin{lemma}
Let $X$ be a non-reflexive Banach space with a basis. Then there exists a Schauder decomposition $X=\sum_{k\in \mathbb{N}}X_k$, an (equivalent) norm $\norm{\cdot}$ on $X$, a functional $\mathbf{f}\in X'$, and a sequence $(\mathbf{e}_k)_{k\in\mathbb{N}}\in \prod_{k\in \mathbb{N}} X_k$ such that $\norm{\mathbf{e}_k}\le1$ and $\mathbf{f}(\mathbf{e}_k)=1$ for all $k\in \mathbb{N}$, and the operators $P_h$ and $Q_h$, $h\in \mathbb{N}$, defined as in \eqref{eq:2.1} are contractive with respect to $\norm{\cdot}$.
\end{lemma}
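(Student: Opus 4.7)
The plan is to chain together three ingredients: Zippin's theorem, a gliding-hump extraction of a block basic sequence, and a standard renorming that makes the resulting decomposition bimonotone.

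By Zippin's theorem (\cite{Zi68}, Thm.~1), the non-reflexive $X$ admits a non-shrinking basis $(x_n)_{n\in\mathbb{N}}$. Unpacking the definition, this provides a functional $\mathbf{f}\in X'$ and a constant $\delta>0$ such that every tail span $\overline{\mathrm{span}}(x_k : k>n)$ contains a norm-one vector on which $|\mathbf{f}|$ is at least $\delta$. A standard gliding-hump argument---exploiting the uniform boundedness of the partial-sum projections along $(x_n)$---then delivers a block basic sequence $(u_k)_{k\in\mathbb{N}}$ with pairwise disjoint consecutive supports $(n_{k-1},n_k]$, satisfying $\norm{u_k}\le 1$ and $\mathbf{f}(u_k)\ge\delta/2$ for each $k$. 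Setting $\mathbf{e}_k:=u_k/\mathbf{f}(u_k)$ yields $\mathbf{f}(\mathbf{e}_k)=1$ and $\norm{\mathbf{e}_k}\le 2/\delta$.

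I would then put $X_k:=\mathrm{span}(x_i : n_{k-1}<i\le n_k)$; this is visibly a Schauder decomposition of $X$ with $\mathbf{e}_k\in X_k$ by construction, and the operators $P_h,Q_h$ of \eqref{eq:2.1} act as the corresponding block partial-sum and coordinate projections. To force contractivity of both families at once, I would renorm by
\[
\norm{x}_\ast:=\sup_{1\le j\le k}\norm{P_k x - P_{j-1} x}.
\]
A short check using $P_jP_k=P_{\min(j,k)}$ gives $\norm{P_h x}_\ast\le\norm{x}_\ast$ and, since $Q_h x=P_h x-P_{h-1}x$ can be read off by taking $j=k=h$ in the supremum, $\norm{Q_h x}_\ast=\norm{Q_h x}\le\norm{x}_\ast$; uniform boundedness of the $P_h$ in the original norm yields the equivalence $\norm{x}\le\norm{x}_\ast\le 2K\norm{x}$. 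A final overall rescaling of $\norm{\cdot}_\ast$ by the positive constant $\delta/2$ brings every $\norm{\mathbf{e}_k}$ below $1$, preserves the contractivity of $P_h$ and $Q_h$, and leaves $\mathbf{f}(\mathbf{e}_k)=1$ untouched (the rescaled $\mathbf{f}$ remains bounded, hence in $X'$).

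The main obstacle is the first step: producing a single functional $\mathbf{f}$ together with a block basic sequence on which it is uniformly bounded below. This is the gliding-hump argument at the heart of \cite{FLW01}, and it is exactly the point where the non-shrinkingness supplied by Zippin's theorem becomes indispensable---the subsequent construction of the $X_k$ and the renorming are essentially bookkeeping.
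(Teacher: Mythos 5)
The paper itself does not prove this lemma --- it is stated as a summary of \cite{Zi68} and the construction on p.~149 of \cite{FLW01} --- but your sketch is a correct reconstruction of exactly that argument: a non-shrinking basis from Zippin's theorem, a gliding-hump extraction of a block basic sequence on which a single functional $\mathbf{f}$ is bounded below, the blocking $X_k:=\mathrm{span}(x_i: n_{k-1}<i\le n_k)$, and the bimonotone renorming $\norm{x}_\ast=\sup_{j\le k}\norm{P_kx-P_{j-1}x}$ followed by a homothety. The only blemishes are cosmetic: the tail norms $\bigl\lVert \mathbf{f}\restr{\overline{\mathrm{span}}(x_k:k>n)}\bigr\rVert$ need only be bounded below by $\delta-\varepsilon$ (the supremum need not be attained), and the final rescaling constant must also absorb the basis constant $K$, not just $\delta/2$; neither affects the argument.
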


\begin{remark}
The proof (\cite{FLW01}, page 149) shows in particular that the sequence $(\mathbf{e}_k)_{k\in\mathbb{N}}$ is in general \textit{not} a basis; in fact, the space $X_1$ need not be one-dimensional and accordingly the Schauder decomposition $X=\sum_{k\in \mathbb{N}}X_k$ need not be associated with a basis.
\end{remark}

In the following we use the notations of Lemma~2.4 and Lemma~3.1.

\begin{lemma}
Let $X$ be a non-reflexive Banach space with a basis. Define a family of operators on $X$ by
\[
N_tx:=\sum_{h\in\mathbb{N}}\mathbf{f}(P_h x)b_{h+1,t}\mathbf{e}_{h+1},\qquad\qquad x\in X,\;\; t\ge0,
\]
with $(b_{n,t})_{n\in \mathbb{N}}$, $t\ge0$, as in \eqref{eq:2.2}. Then
\[
T(t)x:=M(t)x+N_tx,\qquad\qquad x\in X,\; t\ge0,
\]
defines a bounded uniformly continuous semigroup $\ct:=\ttn$ on $X$.
\end{lemma}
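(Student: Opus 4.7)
The plan is to verify in turn that each $N_t$ is well-defined and bounded, that the family $\ct$ satisfies the semigroup law, and that $\ct$ is uniformly continuous. The overall uniform bound $\sup_{t\ge0}\norm{T(t)}<\infty$ then follows by combining the Lemma~\ref{lem:2.5} estimate $\norm{M(t)}\le2$ with the bound on $N_t$ obtained below.

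\textbf{Well-definedness and norm of $N_t$.} Since $\norm{\mathbf{e}_{h+1}}\le1$ and $\norm{P_h}\le1$ by Lemma~3.1, each summand of $N_tx$ has norm at most $\norm{\mathbf{f}}\norm{x}\,b_{h+1,t}$; the (shifted) second identity in \eqref{eq:2.3} gives $\sum_{h=1}^{\infty}b_{h+1,t}=1-e^{-t}$, so the series converges absolutely and
$$\norm{N_tx}\le\norm{\mathbf{f}}(1-e^{-t})\norm{x}.$$
In particular $\norm{N_t}\le\norm{\mathbf{f}}$ for all $t\ge0$ and $\norm{N_t}\to0$ as $t\to0^+$.

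\textbf{Semigroup law.} Because $b_{h+1,0}=0$ we have $N_0=0$, hence $T(0)=I$. Using $M(t)M(s)=M(t+s)$, the identity $T(t+s)=T(t)T(s)$ reduces to
$$N_{t+s}=M(t)N_s+N_tM(s)+N_tN_s.$$
I would identify, for each $h$, the coefficient of $\mathbf{e}_{h+1}$ on either side, using that $M(t)$ acts as multiplication by $e^{-t/(h+1)}$ on $X_{h+1}$, that $\mathbf{f}(P_hM(s)x)=\sum_{k=1}^{h}e^{-s/k}\mathbf{f}(Q_kx)$, and that $\mathbf{f}(P_hN_sx)=\sum_{k=2}^{h}\mathbf{f}(P_{k-1}x)\,b_{k,s}$ (thanks to $\mathbf{e}_k\in X_k$ and $\mathbf{f}(\mathbf{e}_k)=1$). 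Interchanging summation via $e^{-s/k}=\sum_{j=1}^{k}b_{j,s}$ shows that the $N_tN_s$ contribution cancels exactly the off-diagonal part of $N_tM(s)$, leaving $\mathbf{f}(P_hx)\,b_{h+1,t}e^{-s/h}$ as the coefficient of $\mathbf{e}_{h+1}$ in $N_tM(s)+N_tN_s$. Combining with the $M(t)N_s$ term reduces the vector identity to the scalar equality
$$b_{h+1,s}e^{-t/(h+1)}+b_{h+1,t}e^{-s/h}=b_{h+1,t+s},$$
which follows by direct expansion of \eqref{eq:2.2}.

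\textbf{Uniform continuity.} By the first two steps,
$$\norm{T(t)-I}\le\norm{M(t)-I}+\norm{N_t}\longrightarrow0\quad\text{as }t\to0^+,$$
the first term tending to zero by uniform continuity of $\mathcal{M}$ (Lemma~\ref{lem:2.5}). Once the semigroup law is available, the standard estimates $\norm{T(t+h)-T(t)}\le\norm{T(t)}\norm{T(h)-I}$ and its symmetric left-sided counterpart on any bounded time interval upgrade continuity at $0$ to uniform continuity on $[0,\infty)$.

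\textbf{Main obstacle.} The algebra in the semigroup-law step: the cancellation between $N_tN_s$ and the off-diagonal part of $N_tM(s)$ requires reshuffling triple sums via both formulas in \eqref{eq:2.3}, and it is precisely there that the normalization $\mathbf{f}(\mathbf{e}_k)=1$ provided by Lemma~3.1 enters in an essential way. The other two steps are routine norm estimates once Lemma~3.1 and Lemma~\ref{lem:2.5} are in hand.
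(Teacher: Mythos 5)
Your proposal is correct and follows essentially the same route as the paper: the same decomposition $T(t)=M(t)+N_t$, the same reduction of the semigroup law to $N_{t+s}=M(t)N_s+N_tM(s)+N_tN_s$, and the same telescoping identities \eqref{eq:2.3} to verify it, with the same norm bound $\norm{N_t}\le\norm{\mathbf{f}}$ from contractivity of the $P_h$ and $\norm{\mathbf{e}_k}\le1$. The only (harmless) differences are organizational: you compare coefficients of $\mathbf{e}_{h+1}$ for general $x$ where the paper checks the identity on $x_k\in X_k$, and you get uniform continuity from $\norm{T(t)-I}\to0$ directly rather than by exhibiting the bounded generator $B=A+\dot{N}$ as the paper does (which the paper then reuses in Theorem~3.4).
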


\begin{proof}
Since $M(0)=I$ and $N_0=0$, $T(0)=I$ holds. Let now $x_k\in X_k$, $k\in \mathbb{N}$, $t,s\ge0$. To show that $\ct$ satisfies the semigroup law, it suffices to check that $T(t+s)x_k=T(t)T(s)x_k$, or rather, since $(M(t))_{t\ge0}$ is a semigroup by Lemma~\ref{lem:2.5}, that
\[
M(t)N_s x_k + N_t M(s)x_k + N_t N_s x_k=N_{t+s}x_k.
\]
Observe that
\[
N_t x_k=\mathbf{f}(x_k)\!\sum_{h=k+1}^\infty b_{h,t} \mathbf{e}_h, \quad\textrm{and}\;\; \textrm{in particular}\quad N_t \mathbf{e}_k=\!\sum_{h=k+1}^\infty b_{h,t} \mathbf{e}_h.
\]
Hence, we obtain
\begin{align*}
M(t)N_s x_k + & N_t M(s)x_k + N_t N_s x_k\\
&=\sum_{j=k+1}^\infty \mathbf{f}(x_k)b_{j,s} e^{-\frac{t}{j}} \mathbf{e}_j + e^{-\frac{s}{k}}\!\!\sum_{h=k+1}^\infty \mathbf{f}(x_k)b_{h,t} \mathbf{e}_h + \sum_{j=k+1}^\infty \mathbf{f}(x_k) b_{j,s}\bigg(\sum_{h=j+1}^\infty b_{h,t} \mathbf{e}_h\bigg)\\
&=\mathbf{f}(x_k)\left(\sum_{j=k+1}^\infty\left(e^{-\frac{t}{j}} b_{j,s} + e^{-\frac{s}{k}} b_{j,t} \right)\mathbf{e}_j + \sum_{l=k+2}^\infty b_{l,t} \bigg(\sum_{j=k+1}^{l-1} b_{j,s}\bigg)\mathbf{e}_l\right)\\
&\stackrel{\eqref{eq:2.3}}{=} \mathbf{f}(x_k)\left(\sum_{j=k+1}^\infty \left(e^{-\frac{t}{j}} b_{j,s} + e^{-\frac{s}{k}} b_{j,t} \right)\mathbf{e}_j + \sum_{j=k+1}^\infty b_{j+1,t} \left(e^{-\frac{s}{j}}-e^{-\frac{s}{k}}\right)\mathbf{e}_{j+1}\right)\\
&=\mathbf{f}(x_k)\;\;\sum_{j=k+1}^\infty \left(e^{-\frac{t+s}{j}} - e^{-\frac{t+s}{j-1}}\right)\mathbf{e}_j = \mathbf{f}(x_k) \sum_{j=k+1}^\infty b_{j,t+s} \mathbf{e}_j =N_{t+s}x_k.
\end{align*}
Observe now that $N_t/t$ converges strongly as $t\to 0^+$, and we obtain
\[
\dot{N}x:=\lim_{t\to 0^+}\frac{N_t x}{t}=\sum_{h\in \mathbb{N}} \frac{\mathbf{f}(P_h x)}{h^2+ h}\mathbf{e}_{h+1},\qquad\qquad x\in X.
\]
The semigroup $\ct$ is then generated by the operator $B$ given by
\[
Bx:=Ax+\dot{N}x=-\sum_{h\in \mathbb{N}}\left(\frac{1}{h}Q_h x - \frac{\mathbf{f}(P_h x)}{h^2+h}\mathbf{e}_{h+1}\right),\qquad\qquad x\in X.
\]
Since $\norm{\dot{N}}\le\norm{\mathbf{f}}$, and hence also $B$ is bounded, it follows that $\ct$ is uniformly continuous. Finally, observe that
\[
\norm{N_t x} = \left\lVert \sum_{j\in\mathbb{N}} \mathbf{f}(P_j x)b_{j+1,t} \mathbf{e}_{j+1}\right\rVert\le \norm{\mathbf{f}}\sum_{j\in \mathbb{N}}b_{j+1,t}\norm{x}\stackrel{\eqref{eq:2.3}}{\le}\norm{\mathbf{f}}\;\;\norm{x}\qquad\qquad\textrm{for all}\;\; x\in X.
\]
Here, we have used the fact that the operators $P_h$, $h\in\mathbb{N}$, are contractive by Lemma~3.1. Since the semigroup $(M(t))_{t\ge 0}$ is bounded, it also follows that $\norm{T(t)}\le \norm{M(t)} + \norm{N_t}\le 2+\norm{\mathbf{f}}$ for all $t\ge0$.
\end{proof}

\begin{theorem}
If $X$ is a Banach space with a basis, then the following are equivalent:
\begin{itemize}
    \item[(a)] $X$ is reflexive.
    \item[(b)] Every bounded strongly continuous semigroup on $X$ is mean ergodic.
    \item[(c)] Every bounded uniformly continuous semigroup on $X$ is mean ergodic.
\end{itemize}
\end{theorem}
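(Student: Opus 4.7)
The plan is to handle the three implications in order of increasing difficulty. Implication (b) $\Rightarrow$ (c) is immediate, since every bounded uniformly continuous semigroup is a bounded strongly continuous semigroup. For (a) $\Rightarrow$ (b) I would invoke the classical semigroup mean ergodic theorem: on a reflexive Banach space every bounded orbit $\{T(t)x:t\ge 0\}$ is relatively weakly compact, so the result follows from, e.g., \cite{EN00}, Cor.~V.4.7. This leaves the substantive implication (c) $\Rightarrow$ (a), which I would prove by contraposition: assuming $X$ is non-reflexive with a basis, I would exhibit a bounded uniformly continuous semigroup on $X$ that fails to be mean ergodic.

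The candidate counterexample is handed to us by the work of Section~3: Lemma~3.1 equips $X$ with an equivalent norm, a Schauder decomposition $(X_k)_{k\in\mathbb{N}}$, a functional $\mathbf{f}\in X'$, and vectors $\mathbf{e}_k\in X_k$ with $\mathbf{f}(\mathbf{e}_k)=1$ and $\norm{\mathbf{e}_k}\le 1$, while Lemma~3.3 produces from these data a bounded uniformly continuous semigroup $\ct=\ttn$. The whole task is to verify that $\ct$ is not mean ergodic, and the natural test vector is $\mathbf{e}_1\in X_1$. Since $Q_h\mathbf{e}_1=0$ for $h>1$ and $P_h\mathbf{e}_1=\mathbf{e}_1$ for all $h\ge 1$, the formulas defining $M(s)$ and $N_s$ collapse to
\[
T(s)\mathbf{e}_1=e^{-s}\mathbf{e}_1+\sum_{h\ge 2}b_{h,s}\mathbf{e}_h=\sum_{h\in\mathbb{N}}b_{h,s}\mathbf{e}_h,\qquad s\ge 0.
\]

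Now the pivotal observation is that $\mathbf{e}_1$ is pulled in two incompatible directions under the Ces\`aro averages. On the one hand, applying the continuous functional $\mathbf{f}$ termwise and using $\mathbf{f}(\mathbf{e}_h)=1$ together with the telescoping identity $\sum_{h\ge 1}b_{h,s}=1$ from~\eqref{eq:2.3}, one finds $\mathbf{f}(T(s)\mathbf{e}_1)=1$ for every $s\ge 0$, hence $\mathbf{f}(C(r)\mathbf{e}_1)=1$ for every $r>0$. Any strong limit $y$ of $C(r)\mathbf{e}_1$ would therefore satisfy $\mathbf{f}(y)=1$, and in particular $y\ne 0$. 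On the other hand, boundedness of $Q_h$ yields
\[
Q_hC(r)\mathbf{e}_1=\left(\frac{1}{r}\int_0^r b_{h,s}\,ds\right)\mathbf{e}_h\xrightarrow[r\to\infty]{}0,
\]
and then continuity of the contractive projections $P_n$ forces $P_ny=0$ for every $n$, so by uniqueness of the Schauder expansion $y=0$, a contradiction.

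The main obstacle is conceptual rather than computational: one has to appreciate that $\mathbf{e}_1$ is simultaneously sent to zero coordinate-wise by the dissipative part $M$ of the semigroup and kept away from zero globally by the perturbation $N$, whose nondegeneracy is precisely what the Zippin/Fonf--Lin--Wojtaszczyk input (Lemma~3.1) guarantees via the existence of the functional $\mathbf{f}$. Once this dichotomy is identified, the contradiction is essentially forced, and the theorem follows.
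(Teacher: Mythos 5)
Your proposal is correct, and for the crucial implication (c)$\Rightarrow$(a) it takes a genuinely different route from the paper. Both arguments use the same counterexample semigroup $\ct$ from Lemmas~3.1 and~3.3, but the paper proves non-mean-ergodicity abstractly: it shows $\ker(B)=\{0\}$ by an induction on the coordinates $Q_kx$, shows $\mathbf{f}\in\ker(B')\setminus\{0\}$ by a direct computation of $\langle Bx_k,\mathbf{f}\rangle$, and then invokes Nagel's separation criterion (Lemma~\ref{lem:2.2} together with Remark~\ref{rem:2.3}). You instead test mean ergodicity on the single vector $\mathbf{e}_1$ and derive an explicit contradiction: $\mathbf{f}(C(r)\mathbf{e}_1)\equiv 1$ forces any strong limit $y$ to satisfy $\mathbf{f}(y)=1$, while $Q_hC(r)\mathbf{e}_1=\bigl(\frac{1}{r}\int_0^r b_{h,s}\,ds\bigr)\mathbf{e}_h\to 0$ (using $0\le b_{h,s}$ and $\int_0^\infty b_{h,s}\,ds=1$) forces $P_ny=0$ for all $n$ and hence $y=0$. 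Your two observations are in fact the concrete shadows of the paper's two structural facts: the constancy of $\mathbf{f}(T(s)\mathbf{e}_1)$ is exactly $\mathbf{f}\in\ker(B')$ evaluated along the orbit of $\mathbf{e}_1$, and the coordinatewise decay reflects $\ker(B)=\{0\}$. What your version buys is self-containedness --- it bypasses Lemma~\ref{lem:2.2} and Remark~\ref{rem:2.3} entirely and exhibits a visibly divergent Ces\`aro average; what the paper's version buys is the structural information (trivial fixed space, nontrivial adjoint fixed space) that slots directly into the general ergodic-theoretic framework and parallels the alternative proof via Sine's lemma. Your treatment of (a)$\Rightarrow$(b) and (b)$\Rightarrow$(c) coincides with the paper's.
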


\begin{proof}
(a)$\Rightarrow$ (b) It is a well-known result due to Lorch, cf. \cite{EN00}, Expl.~V.4.7.

(b)$\Rightarrow$ (c) Obvious.

(c)$\Rightarrow$ (a) In Lemma~3.3 we have constructed a semigroup $\ct$ on a non-reflexive Banach space with a basis. We prove that $\ct$ is not mean ergodic by showing that its generator $B$ has trivial null space, and that $\mathbf{f}\not=0$ is in the null space of $B'$. Let $x\in \ker B$, then $-Ax=\dot{N}x$ and hence
\[
\sum_{k\in\mathbb{N}} \frac{Q_k x}{k}=\sum_{k\in \mathbb{N}} \frac{\mathbf{f}(P_k x)}{k^2+k}\mathbf{e}_{k+1}.
\]
By the uniqueness of the Schauder expansion of elements of $X$ we obtain that $x$ solves the system
\[
\left\lbrace\begin{aligned}
Q_1 x&=0\\
Q_k x&= \frac{\mathbf{f}(P_{k-1}x)}{k-1}\mathbf{e}_k, \qquad k=2,3,\ldots.
\end{aligned}\right.
\]
It suffices to show by induction that $P_n x=0$ for all $n\in \mathbb{N}$, with $P_1x=x_1=0$. Let $P_{n-1}x=0$. It follows that $Q_n x=0$, hence $P_n x=P_{n-1} x + Q_n x = 0$, and we conclude that $\ker(B)=\{0\}$. Let now $x_k\in X_k$, $k\in \mathbb{N}$. Then
\[
Bx_k=-\frac{1}{k}x_k + \mathbf{f}(x_k)\sum_{h=k}^\infty \frac{1}{h^2+h}\mathbf{e}_{h+1},
\]
and we obtain
\[
\left< x_k,B' \mathbf{f}\right> = \left<Bx_k, \mathbf{f}\right>= \mathbf{f}(x_k)\left(-\frac{1}{k}+\sum_{h=k}^\infty\frac{1}{h^2+h}\right)=0.
\]
It follows by linearity that $\mathbf{f}\in\ker(B')$, and by Lemma~\ref{lem:2.2} the claim holds.
\end{proof}

The following alternative proof of Theorem~3.4 is due R. Nagel, and seems to be much shorter, but is based on the original result of Fonf, Lin, and Wojtaszczyk. Recall the following, due to R. Sine (\cite{Se70}).

\begin{lemma}
A power-bounded operator is mean ergodic if and only if its fixed space separates the fixed space of its adjoint.
\end{lemma}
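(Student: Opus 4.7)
The plan is to follow the template of Lemma~2.2 in the discrete setting, substituting the Ces\`aro averages $A_n:=\frac{1}{n}\sum_{k=1}^n T^k$ for the continuous means $C(r)$; the hypothesis $\sup_n\norm{T^n}<\infty$ will supply the uniform bound $\sup_n\norm{A_n}<\infty$ that every approximation argument below rests on.

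For necessity, I would assume that $A_n x$ converges strongly for every $x\in X$ and set $Px:=\lim_n A_n x$. A first observation is that the telescoping identity $TA_n - A_n = \frac{1}{n}(T^{n+1} - T)$ tends strongly to zero by power-boundedness, hence $TP=P$, so that $Px\in\ker(I-T)$ for every $x\in X$. Given any nonzero $x'\in\ker(I-T')$, the relation $T'x'=x'$ yields $A_n' x' = x'$ for all $n$, so passing to the limit in $\langle A_n x, x'\rangle = \langle x, x'\rangle$ produces $\langle Px, x'\rangle = \langle x, x'\rangle$. Choosing $x\in X$ with $\langle x, x'\rangle \neq 0$, the element $Px\in\ker(I-T)$ then separates $x'$ from $0$, which is exactly the required property.

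For sufficiency, I would apply Hahn-Banach to $Y:=\overline{\ker(I-T)+(I-T)X}$. If $Y\neq X$, there would exist a nonzero $x'\in Y^\perp$; annihilation of $(I-T)X$ is equivalent to $T'x'=x'$, placing $x'$ in $\ker(I-T')$, while simultaneous annihilation of $\ker(I-T)$ would contradict the separation assumption. Hence $Y=X$. It remains to verify strong convergence of $A_n$ on the dense subspace $\ker(I-T)+(I-T)X$: on $\ker(I-T)$ one has $A_n x = x$ identically, while on $(I-T)X$ one computes $A_n(I-T)z = \frac{1}{n}(T - T^{n+1})z \to 0$ thanks to power-boundedness. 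A standard $3\varepsilon$-argument, again exploiting $\sup_n\norm{A_n}<\infty$, then transfers convergence from this dense subspace to all of $X=Y$.

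The one step that is not purely algebraic is the Hahn-Banach inversion that converts an obstruction to density in $X$ into a nonzero element of $\ker(I-T')$; this is precisely where the separation hypothesis is leveraged, and it is the only point where infinite-dimensional Banach space geometry really enters. Everything else is forced by the telescoping identity and the uniform boundedness of the averages $A_n$.
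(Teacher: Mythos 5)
Your argument is correct and complete. Note, however, that the paper does not prove this lemma at all: it simply cites R.~Sine's original article and moves on, so there is no internal proof to compare against. What you have written is the standard textbook derivation of Sine's criterion via the mean ergodic decomposition: the telescoping identities $TA_n-A_n=\frac{1}{n}(T^{n+1}-T)$ and $A_n(I-T)=\frac{1}{n}(T-T^{n+1})$ handle the algebra, power-boundedness makes both tend to zero strongly, and the Hahn--Banach separation step correctly converts failure of density of $\ker(I-T)+(I-T)X$ into a nonzero functional in $\ker(I-T')$ annihilating $\ker(I-T)$, contradicting the separation hypothesis. The concluding $3\varepsilon$-argument using $\sup_n\lVert A_n\rVert<\infty$ is exactly what is needed to pass from the dense subspace to all of $X$. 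This is also the discrete counterpart of Nagel's Lemma~2.2, as you intended, so your proof fits naturally into the paper's framework and could serve as a self-contained substitute for the citation.
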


\begin{proof}[Alternative proof of the implication (c)$\Rightarrow$ (a) in Theorem~3.4]
Let $(X,\norm{\cdot})$ be a non-reflexive Banach space with a basis. Then, by \cite{FLW01}, Cor.~1 there exists a power-bounded operator $T$ that is not mean ergodic. Consider now the semigroup $\ssn$ generated by the bounded operator $G:=T-I$ on $X$. Since the null space of the generator $G$ is the fixed space of $T$, the semigroup $\ssn$ is not mean ergodic by Lemma~3.5 and Remark~\ref{rem:2.3}. It remains to prove that $\ssn$ is bounded. Observe now that formula (2.5) in \cite{FLW01} shows that the power-bounded operator $T$ is in general not a contraction. Define a norm on $X$ by
\[
{\left\vert\kern-0.25ex\left\vert\kern-0.25ex\left\vert x
    \right\vert\kern-0.25ex\right\vert\kern-0.25ex\right\vert} := \sup_{n\in \mathbb{N}}\norm{T^n x}.
\]
The norm ${\left\vert\kern-0.25ex\left\vert\kern-0.25ex\left\vert \cdot \right\vert\kern-0.25ex\right\vert\kern-0.25ex\right\vert}$ is equivalent to the original norm $\norm{\cdot}$, and $T$ is contractive with respect to it. Further, we can now estimate the norm of $\ssn$ with respect to ${\left\vert\kern-0.25ex\left\vert\kern-0.25ex\left\vert \cdot \right\vert\kern-0.25ex\right\vert\kern-0.25ex\right\vert}$ by 
\[
{\left\vert\kern-0.25ex\left\vert\kern-0.25ex\left\vert S(t) \right\vert\kern-0.25ex\right\vert\kern-0.25ex\right\vert} = {\left\vert\kern-0.25ex\left\vert\kern-0.25ex\left\vert e^{-t}e^{tT} \right\vert\kern-0.25ex\right\vert\kern-0.25ex\right\vert} \leq e^{-t}\cdot e^{t{\left\vert\kern-0.25ex\left\vert\kern-0.25ex\left\vert T \right\vert\kern-0.25ex\right\vert\kern-0.25ex\right\vert}} = e^{t({\left\vert\kern-0.25ex\left\vert\kern-0.25ex\left\vert T \right\vert\kern-0.25ex\right\vert\kern-0.25ex\right\vert}-1)}\leq 1\qquad\qquad \hbox{for all }t\ge0,
\]
and therefore $\ssn$ is bounded in the original norm.
\end{proof}

As in \cite{FLW01}, Cor.~2, using a result of Pelczynski, we can also derive the following characterization from Theorem~3.4.

\begin{corollary}
An arbitrary Banach space is reflexive if and only if every bounded uniformly continuous semigroup on any of its closed subspaces is mean ergodic.
\end{corollary}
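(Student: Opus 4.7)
The plan is to reduce the corollary to Theorem~3.4 by passing to a suitable closed subspace, the only genuinely external ingredient being a classical theorem of Pelczynski.

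For the forward direction, assume $X$ is reflexive. Every closed subspace $Y\subseteq X$ is then itself reflexive (a standard hereditary property of reflexivity, independent of any basis assumption on $Y$). Applying the implication (a)$\Rightarrow$(b) of Theorem~3.4 to $Y$---equivalently, invoking the theorem of Lorch quoted in its proof, which requires no basis on $Y$---we conclude that every bounded strongly continuous semigroup on $Y$, and \emph{a fortiori} every bounded uniformly continuous semigroup on $Y$, is mean ergodic.

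For the converse I would argue by contraposition: assume $X$ is not reflexive. Here I invoke Pelczynski's theorem that every non-reflexive Banach space contains a closed, non-reflexive subspace $Y$ with a Schauder basis. Then $Y$ itself meets the hypotheses of Theorem~3.4, so the contrapositive of (c)$\Rightarrow$(a) of that theorem produces a bounded uniformly continuous semigroup on $Y$ that fails to be mean ergodic. Since $Y$ is a closed subspace of $X$, this contradicts the assumption that every bounded uniformly continuous semigroup on every closed subspace of $X$ is mean ergodic; hence $X$ must be reflexive.

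The only real input beyond Theorem~3.4 is locating the correct Pelczynski embedding statement; everything else is a direct appeal to that theorem and to the hereditary behaviour of reflexivity under passage to closed subspaces, with no computation required. I do not expect any serious obstacle, as the argument is a template already used verbatim in \cite{FLW01}, Cor.~2, for the power-bounded operator setting.
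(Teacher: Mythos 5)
Your argument is correct and is exactly the route the paper intends: the corollary is stated as a consequence of Theorem~3.4 via Pelczynski's theorem that every non-reflexive Banach space contains a closed non-reflexive subspace with a basis, with the forward direction following from the hereditary nature of reflexivity and Lorch's theorem. Nothing to add.
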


\noindent\textbf{Example 3.7.} To understand better the construction presented in the first proof of Theorem~3.4, let us consider a simple case where the objects introduced in Lemma~3.1 can be written explicitly. Take the Banach space $X=l^1$, which admits the Schauder decomposition $X=\sum_{k\in \mathbb{N}}\textrm{span}(\mathbf{e}_k)$. Here $\mathbf{e}_k$ is the $k$-th vector of the usual basis of $l^1$, i.e., $\mathbf{e}_k:=(\delta_{jk})_{j\in \mathbb{N}}$. Moreover, the functional $\mathbf{f}:=(1,1,\ldots)\in X'=l^\infty$ fulfills the assumptions of Lemma~3.1. The generator $B$ of the bounded, uniformly continuous, non-mean ergodic semigroup $\ct$ constructed as in the proof of Theorem~3.4 is now given by the matrix
\[
\begin{pmatrix}
-1 & \frac{1}{2} & \frac{1}{6} & \dots & \dots & \frac{1}{h^2-h} & \dots \\
0  & -\frac{1}{2}& \frac{1}{6} & \dots & \dots & \dots & \dots \\
0  & 0 & -\frac{1}{3}& \dots & \dots & \frac{1}{h^2-h} & \dots \\
\vdots & & & \ddots & & \vdots & \\
\vdots & & & & \ddots & \vdots & \\
0 & & & & & -\frac{1}{h} & \dots \\
\vdots & & & & & & \ddots
\end{pmatrix}.
\]

\section*{Acknowledgment}
The topic of this paper was suggested to me by Prof. Jerry Goldstein. I wish to thank him and Prof. Gis\`ele Goldstein for their kind hospitality, for many inspiring discussions, and for their constant support during my visit at the University of Memphis in spring 2002.

\bigskip
\noindent
\textsc{Arbeitsbereich Funktionalanalysis\\
Mathematisches Institut der Universit\"at T\"ubingen\\
Auf der Morgenstelle 10\\
D--72076 T\"ubingen, Germany}\\
\textit{E-mail address}: demu@fa.uni-tuebingen.de

\end{document}